\newtheorem{theorem}{Theorem}[section]
\newtheorem{lemma}[theorem]{Lemma}
\newtheorem{proposition}[theorem]{Proposition}
\theoremstyle{definition}
\theoremstyle{remark}
\newtheorem{remark}[theorem]{Remark}
\numberwithin{equation}{section}
\DeclareMathOperator{\Lip}{\mathrm{Lip}}
\DeclareMathOperator{\cco}{\overline{co}}
\DeclareMathOperator{\spt}{spt}
\DeclareMathOperator*{\esssup}{ess~sup}
\begin{document}

\title[Minimax formula for quasiconvex Hamiltonians]{Two approaches to minimax formula of the additive eigenvalue for quasiconvex Hamiltonians}

\author{Atsushi~Nakayasu}
\address{
Graduate School of Mathematical Sciences, The University of Tokyo
3-8-1 Komaba, Meguro, Tokyo, 153-8914 Japan
}
\curraddr{}
\email{ankys@ms.u-tokyo.ac.jp}
\thanks{
The work of the author was supported by a Grant-in-Aid for JSPS Fellows No.\ 25-7077 and the Program for Leading Graduate Schools, MEXT, Japan.
}

\subjclass[2010]{Primary 35F21; Secondary 49L25, 26B25, 26B05}

\keywords{
Additive eigenvalue problem,
Hamilton-Jacobi equations,
Minimax formula,
Quasiconvex Hamiltonians,
Quasiconvex functions,
Lipschitz continuous functions
}

\date{}

\begin{abstract}

Two different proofs for an inf-sup type representation formula (minimax formula) of the additive eigenvalues corresponding to first-order Hamilton-Jacobi equations are given for quasiconvex (level-set convex) Hamiltonians not necessarily convex.
The first proof, which is similar to known proofs for convex Hamiltonians, invokes a Jensen-like inequality for quasiconvex functions instead of the standard Jensen's inequality.
The second proof is completely different with elementary calculations.
It is based on convergence of derivatives of mollified Lipschitz continuous functions whose proof is also given.
These methods also relate to an approximation problem of viscosity solutions.
\end{abstract}

\maketitle

\section{Introduction}
\label{s:intro}

It is well-known that the additive eigenvalue for a Hamilton-Jacobi equation has an inf-sup type representation formula if the Hamiltonian is continuous, convex and coercive.
In this article we will introduce two approaches to this problem.
One is similar to known arguments using Jensen's inequality directly to the Hamiltonian
while the other one invokes Clarke's generalized gradient.
Both of these two approaches will derive the representation formula under a weaker assumption on the Hamiltonian.
We now stress that the latter approach is rather new as far as the author knows
and using a crucial lemma on convergence of mollifications of Lipschitz continuous functions (Lemma \ref{t:mollip}),
whose proof will be given in Section \ref{s:pfmollip}.

For simplicity, we consider first-order Hamilton-Jacobi equations in the periodic setting of the form
\begin{equation}
\label{e:aep}
H(x, D u) = a \quad \text{in $\mathbf{T}^N := \mathbf{R}^N/\mathbf{Z}^N$}
\end{equation}
with a parameter $a \in \mathbf{R}$.
Here, $H = H(x, p) \colon \mathbf{T}^N\times\mathbf{R}^N \to \mathbf{R}$ is a function called a Hamiltonian satisfying the following conditions:
\begin{itemize}
\item[(A1)]
(Continuity)
$H$ is continuous on $\mathbf{T}^N\times\mathbf{R}^N$.
\item[(A2)]
(Convexity)
$H$ is convex in the variable $p \in \mathbf{R}^N$ for each $x \in \mathbf{T}^N$.
\end{itemize}
An (additive) eigenvalue is a unique constant $a \in \mathbf{R}$ such that \eqref{e:aep} admits a viscosity solution $u$ (\cite{CL83}) with Lipschitz continuity;
$D u$ denotes a gradient of the unknown function $u = u(x)$.
Then, the eigenvalue $a = c$, if exists, will satisfy the representation formulas
\begin{align}
\label{e:cvformula1}
c &= \inf_{u \in C^1(\mathbf{T}^N)}\sup_{\nabla u}H, \\
\label{e:cvformula2}
c &= \inf_{u \in \Lip(\mathbf{T}^N)}\sup_{\nabla u}H,
\end{align}
where $\nabla u$ is the graph of the classical gradients (also denoted by $\nabla u$) of $u$,
i.e.\
$$
\nabla u := \{ (x, p) \in \mathbf{T}^N\times\mathbf{R}^N \mid \text{$u(y) = u(x)+p\cdot(y-x)+o(|y-x|)$ as $y \to x$} \}.
$$
Note that Lipschitz continuous functions $u \in \Lip(\mathbf{T}^N)$ are differentiable almost everywhere by Rademacher's theorem and $\nabla u \in L^\infty(\mathbf{T}^N)$.

This kind of expression (the right-hand side of \eqref{e:cvformula1}) was found as a variational formula of Ma\~{n}\'{e}'s critical value with respect to the corresponding Lagrangian by Contreras-Iturriaga-Paternain-Paternain \cite{CIPP98}.
On the other hand, this is a pure partial differential equations problem.
In view of this, the above minimax formula was established by Fathi in the context of weak KAM (Kolmogorov-Arnold-Moser) theory powered by the viscosity solution theory;
see \cite[Section 6]{E03}.
We remark that the additive eigenvalue problem \eqref{e:aep} also appears in solving homogenization problems \cite{LPV87} and long time behaviors \cite{NR99}.
It is also known that the minimax formula is useful for computing the additive eigenvalue numerically \cite{GO04}.
This work will provide natural extensions for these theories.

In this article we extend the representation formula for general quasiconvex Hamiltonians (see (A2') below) instead of the convexity assumption (A2) with two different proofs.
\begin{itemize}
\item[(A2')]
(Quasiconvexity)
$H$ is quasiconvex in the variable $p \in \mathbf{R}^N$ for each $x \in \mathbf{T}^N$,
i.e.\
$H(x, \theta p + (1 - \theta)q) \le \max\{ H(x, p), H(x, q) \}$
for all $p, q \in \mathbf{R}^N$, $0 \le \theta \le 1$ and $x \in \mathbf{T}^N$.
\end{itemize}
We remark that the quasiconvexity is sometimes called level-set convexity since (A2') is equivalent to the condition that the sublevel sets $\{ p \in \mathbf{R}^N \mid H(x, p) \le a \}$ are convex for all $a \in \mathbf{R}$ and $x \in \mathbf{T}^N$.

Recently several authors study homogenization problems with quasiconvex Hamiltonians;
see \cite{FS05} and \cite{AS13}.
In fact, the authors of \cite{AS13} mention some relation between the eigenvalue and the minimax expression
and \cite[Proposition 6.2]{AS13} will immediately show one of the representations \eqref{e:cvformula2}.
Indeed, we can show \eqref{e:cvformula2} easily in view of Propositions \ref{t:cvest} and \ref{t:cvest2}.
On the other hand, to show \eqref{e:cvformula1} need more advanced calculations such as Lemmas \ref{t:measqconv} and \ref{t:mollip} below,
and there seem to be no results on it as far as the author knows.
We also point out that the authors of \cite{YGR08} posed a Hamiltonian of the form
$$
H(x, p) = H^2_\varepsilon(x, p) = \sigma\left(\frac{x}{\varepsilon}\right)\frac{p}{p_s}\tanh\left(\frac{p_s}{p}\right)
$$
with a positive continuous function $\sigma$, a constant $p_s > 0$ and a parameter $\varepsilon > 0$.
This Hamiltonian is quasiconvex (A2') as well as non-coercive.
Long time behavior and homogenization for this Hamiltonian have been studied in \cite{GLM14} and \cite{HNN14}.

In order to explain the main idea of one of the proofs,
let us review the known proof under the assumptions (A1) and (A2).
This proof is inspired by \cite{BJ90}, \cite[Proposition 2.2]{GO04} and \cite[Subsection 4.2]{MT14}.
First, it is easy to show the inequalities $\inf_{u \in \Lip}\sup_{\nabla u}H \le c \le \inf_{u \in C^1}\sup_{\nabla u}H$.
We hence claim $\inf_{u \in C^1}\sup_{\nabla u}H \le \inf_{u \in \Lip}\sup_{\nabla u}H$.
Now, for $u \in \Lip(\mathbf{T}^N)$ take mollifications $u_n := u*\eta_n \in C^\infty(\mathbf{T}^N)$ with the standard Friedrichs mollifier $\eta_n$.
Then, we observe that
$$
\begin{aligned}
H(x, \nabla u_n(x))
&= H\left(x, \int_{\mathbf{T}^N} \nabla u(x-y)\eta_n(y)d y\right) \\
&\le \int_{\mathbf{T}^N} H(x, \nabla u(x-y))\eta_n(y)d y \\
&\le \int_{\mathbf{T}^N} H(x-y, \nabla u(x-y))\eta_n(y)d y+\alpha_n
\le \sup_{\nabla u}H+\alpha_n
\end{aligned}
$$
for all $x \in \mathbf{T}^N$
and therefore we will have the desired inequality.
Here, we have invoked the convexity (A2) so that Jensen's inequality yields the first inequality;
the second equality follows from the continuity (A1) with some error term $\alpha_n > 0$ such that $\alpha_n \to 0$.

Our idea of the proof is to use another Jensen-like inequality for quasiconvex functions stated below.
\begin{lemma}[Fundamental inequality for quasiconvex functions]
\label{t:measqconv}
Let $f$ be a lower semicontinuous function defined on $\mathbf{R}^N$.
Then, $f$ is quasiconvex on $\mathbf{R}^N$
if and only if
$$
f\left(\int_\Omega X d\mu\right) \le \esssup_\Omega f\circ X
$$
for all measure spaces $(\Omega, \mu)$ with $\mu(\Omega) = 1$ and all $\mathbf{R}^N$-valued integrable functions $X$ on $\Omega$.
\end{lemma}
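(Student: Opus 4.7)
The plan is to split into the two implications of the biconditional and treat each separately, with the backward direction being essentially trivial and the forward direction relying on a standard closed-convex-set argument.

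For the easy direction (sufficiency of the integral inequality), I would specialize to a two-point probability space. Given $p, q \in \mathbf{R}^N$ and $\theta \in [0,1]$, take $\Omega = \{0,1\}$ with $\mu(\{0\}) = \theta$, $\mu(\{1\}) = 1-\theta$, and $X(0) = p$, $X(1) = q$. Then $\int_\Omega X\,d\mu = \theta p + (1-\theta)q$ and $\esssup_\Omega f \circ X = \max\{f(p), f(q)\}$, so the hypothesis reads $f(\theta p + (1-\theta)q) \le \max\{f(p), f(q)\}$, i.e.\ quasiconvexity. Note lower semicontinuity is not needed here.

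For the harder direction, fix a probability space $(\Omega,\mu)$ and an integrable $X \colon \Omega \to \mathbf{R}^N$, and set $M := \esssup_\Omega f \circ X$. The case $M = +\infty$ is trivial, so assume $M < \infty$. Consider the sublevel set
\[
L := \{ p \in \mathbf{R}^N \mid f(p) \le M \}.
\]
By quasiconvexity, $L$ is convex; by lower semicontinuity, $L$ is closed. By definition of $M$, we have $X(\omega) \in L$ for $\mu$-almost every $\omega \in \Omega$. The goal reduces to showing that $\int_\Omega X\,d\mu \in L$, which then gives $f(\int_\Omega X\,d\mu) \le M$ as required.

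The only real step is the claim that if $X \in L$ a.e.\ for a closed convex set $L \subset \mathbf{R}^N$, then $\int_\Omega X\,d\mu \in L$. I would argue this by Hahn-Banach: since $L$ is closed and convex, it is the intersection of all closed affine half-spaces containing it. For any such half-space $\{p : \xi \cdot p \le \beta\} \supseteq L$, we have $\xi \cdot X(\omega) \le \beta$ a.e., so integrating against the probability measure yields $\xi \cdot \int_\Omega X\,d\mu \le \beta$. Intersecting over all such half-spaces gives $\int_\Omega X\,d\mu \in L$. This is the main (and rather standard) technical point; no further obstacle is anticipated.
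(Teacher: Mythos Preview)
Your proof is correct and follows essentially the same approach as the paper: both reduce to the fact that a closed convex set contains the barycenter of any probability measure supported on it, proved via hyperplane separation. The only cosmetic difference is that you apply this fact directly to the sublevel set $\{f \le M\}$, whereas the paper applies it to the closed convex hull of the (essential) image $X(\tilde{\Omega})$ and then invokes quasiconvexity to bound $f$ on that hull.
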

In view of this inequality, we can improve the proof for the representation formula.
The proof will be given in Section \ref{s:pfmeasqconv}.
We point out that a discrete version of Lemma \ref{t:measqconv} has already been studied in \cite{DP12}.

The other proof is one using the \emph{generalized gradients} of Lipschitz functions $u$ defined by
$$
\partial u := \cco_p\overline{\nabla u},
$$
i.e.\ $\partial u \subset \{ (x, p) \in \mathbf{T}^N\times\mathbf{R}^N \}$ is the closed convex hull with respect to $p$ of the closure of the classical gradients $\nabla u$.
This is nothing but Clarke's gradients; see \cite{C75} and \cite{C83}.
Also note that $\partial u$ is compact since $\nabla u \in L^\infty(\mathbf{T}^N)$.
Now, the quasiconvexity of $H$ implies that $\inf_{u \in \Lip}\sup_{\nabla u}H = \inf_{u \in \Lip}\sup_{\partial u}H$.
The remaining inequality $\inf_{u \in C^1}\sup_{\nabla u}H \le \inf_{u \in \Lip}\sup_{\partial u}H$ can be shown by a graph convergence of the standard mollifications of Lipschitz functions stated below.
The proof will be given in Section \ref{s:pfmollip}.

\begin{lemma}[Convergence of mollifications]
\label{t:mollip}
Let $u \in \Lip(\mathbf{T}^N)$ and let $u_n \in C^\infty(\mathbf{T}^N)$ be the standard mollification $u*\eta_n$.
If a sequence $(x_n, p_n) \in \nabla u_n$ converges to $(x, p) \in \mathbf{T}^N\times\mathbf{R}^N$,
then $(x, p) \in \partial u$.
\end{lemma}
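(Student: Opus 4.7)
The plan is to exploit the integral representation
$$
p_n = \nabla u_n(x_n) = \int_{\mathbf{T}^N} \nabla u(x_n - y)\,\eta_n(y)\,dy,
$$
valid since $\nabla u \in L^\infty(\mathbf{T}^N)$ by Rademacher's theorem, and to view $p_n$ as the barycenter of a probability measure on $\mathbf{R}^N$ concentrated on classical gradients of $u$ at points close to $x_n$.

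Fix $\delta > 0$ and set
$$
T_\delta := \overline{\{\nabla u(z) : z \in \overline{B_\delta(x)},\ u \text{ is classically differentiable at } z\}} \subset \mathbf{R}^N,
$$
which is compact since $|\nabla u| \le \Lip(u)$ almost everywhere. For $n$ large enough, $\spt\eta_n \subset \overline{B_{\delta/2}(0)}$ and $|x_n - x| < \delta/2$, so $x_n - y \in \overline{B_\delta(x)}$ for every $y \in \spt\eta_n$. The elementary fact that the barycenter of a probability measure on $\mathbf{R}^N$ lies in the closed convex hull of the essential range of the integrand gives $p_n \in \cco\, T_\delta$, and passing to the limit $n \to \infty$ yields $p \in \cco\, T_\delta$ for every $\delta > 0$.

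It remains to verify the chain
$$
\bigcap_{\delta > 0}\cco\, T_\delta \;=\; \cco\Bigl(\bigcap_{\delta > 0}T_\delta\Bigr) \;=\; \{q \in \mathbf{R}^N : (x,q) \in \partial u\}.
$$
The second equality reduces to the direct identification $\bigcap_{\delta > 0}T_\delta = \{q : (x,q) \in \overline{\nabla u}\}$, which is immediate from the definition of closure, combined with $\partial u = \cco_p\overline{\nabla u}$. The first equality is where I expect the main technical effort to sit: given $q$ in the left-hand side, Carathéodory's theorem yields, for each $\delta > 0$, a representation $q = \sum_{i=1}^{N+1}\lambda_i^\delta q_i^\delta$ with $q_i^\delta \in T_\delta$ and nonnegative $\lambda_i^\delta$ summing to $1$; since every $T_\delta$ is contained in the fixed ball $\overline{B_{\Lip(u)}(0)}$ and the standard simplex is compact, a diagonal subsequence as $\delta \to 0$ produces limits $\lambda_i$ and $q_i$ with $q_i \in \bigcap_{\delta > 0}T_\delta$ (using that $\delta \mapsto T_\delta$ is decreasing and each $T_\delta$ is closed), exhibiting $q$ as an element of the right-hand side.
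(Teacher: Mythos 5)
Your proof is correct, and it takes a genuinely different route from the paper's. The paper's argument applies Jensen's inequality to the convex Lipschitz function $q \mapsto d(\partial u(x), q)$, obtaining $d(\partial u(x), p_n) \le \int d(\partial u(x), \nabla u(x_n - z))\,\eta_n(z)\,dz$, and then invokes a separate upper--semicontinuity estimate (Lemma~\ref{t:distlip}) stating $d(\partial u(x), p) \le \omega_x(|y-x|)$ for $(y,p) \in \partial u$, which forces $d(\partial u(x), p_n) \to 0$. You instead trap $p_n$ directly inside the nested family of compact convex sets $\cco\, T_\delta$ built from the classical gradients near $x$, and then identify $\bigcap_\delta \cco\,T_\delta$ with $\partial u(x)$ by exchanging the intersection with the convex-hull operation via Carath\'eodory's theorem and a diagonal compactness argument. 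The two arguments rest on the same ``barycenter lies in the closed convex hull'' primitive --- you use it explicitly, while the paper encodes it as Jensen's inequality for the distance function --- but they differ in how the limit is captured. Your approach is more self-contained (no auxiliary semicontinuity lemma), at the cost of the Carath\'eodory bookkeeping; the paper's approach produces a clean quantitative modulus bound $d(\partial u(x), p_n) \le \sup_{z \in \spt\eta_n}\omega_x(|x_n - z - x|)$, which meshes naturally with the rest of the paper's estimates. One small remark: the first equality $\bigcap_\delta \cco\,T_\delta = \cco\bigl(\bigcap_\delta T_\delta\bigr)$ could also be deduced directly from a separating-hyperplane argument (support functions of a decreasing family of compacta converge to the support function of the intersection), avoiding Carath\'eodory entirely, but your version is equally valid.
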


In our arguments, the quasiconvexity (A2') is essential.
We point out that the authors of \cite{ATY13} and \cite{ATY14} obtain partial results on homogenization for Hamiltonians without convexity such as
$$
H(x, p) = (|p|^2-1)^2-V(x)
$$
with a bounded function $V$.
Representation formula for such Hamiltonians is an open problem.

This paper is organized as follows.
In Section \ref{s:main} we give a complete statement of our main result on the minimax formula.
We prove it in Section \ref{s:pfmeasqconv} by using the fundamental inequality for quasiconvex functions (Lemma \ref{t:measqconv})
while we give another proof in Section \ref{s:pfmollip} with the generalized gradient and Lemma \ref{t:mollip}.
The contexts of Sections \ref{s:pfmeasqconv} and \ref{s:pfmollip} are independent
so the reader can skip Section \ref{s:pfmeasqconv}.

\section{Statement of the main theorem}
\label{s:main}

In this section we give a rigorous definition of the viscosity solutions and the eigenvalues of the Hamilton-Jacobi equations \eqref{e:aep} and a complete statement of the main theorem on the minimax formula.
First, define the graphs of \emph{superdifferentials} $D^+ u$ and \emph{subdifferentials} $D^- u$ for a function $u$ by
\begin{align*}
D^+ u := \{ (x, p) \in \mathbf{T}^N\times\mathbf{R}^N \mid \text{$u(y) \le u(x)+p\cdot(y-x)+o(|y-x|)$ as $y \to x$} \}, \\
D^- u := \{ (x, p) \in \mathbf{T}^N\times\mathbf{R}^N \mid \text{$u(y) \ge u(x)+p\cdot(y-x)+o(|y-x|)$ as $y \to x$} \}.
\end{align*}
Note that the superdifferentials and the subdifferentials can be characterized by smooth functions touching $u$ from above or below;
see \cite[Section 2]{CIL92}.
A function $u \in \Lip(\mathbf{T}^N)$ is called a \emph{viscosity subsolution}, a \emph{viscosity supersolution} or a \emph{viscosity solution} of the Hamilton-Jacobi equation \eqref{e:aep} with $a \in \mathbf{R}$
if
$$
\sup_{D^+ u}H \le a,
\quad \inf_{D^- u}H \ge a,
\quad \sup_{D^+ u}H \le a \le \inf_{D^- u}H,
$$
respectively.
A \emph{subeigenvalue}, a \emph{supereigenvalue} or an \emph{eigenvalue} of the additive eigenvalue problem \eqref{e:aep} is a constant $a \in \mathbf{R}$
such that there exists at least one viscosity subsolution, supersolution or solution of \eqref{e:aep}, respectively.
We now define the \emph{upper critical value} and \emph{lower critical value} $c^\pm \in \mathbf{R}\cup\{ \pm \infty \}$ by
\begin{align*}
c^+ = c^+(H) &:= \inf\{ a \in \mathbf{R} \mid \text{$a$ is a subeigenvalue of \eqref{e:aep}} \}, \\
c^- = c^-(H) &:= \sup\{ a \in \mathbf{R} \mid \text{$a$ is a supereigenvalue of \eqref{e:aep}} \}.
\end{align*}

For later convenience we prepare several notations:
Let $B(x, r)$ denote the open ball with center $x$ and radius $r > 0$
and let $\overline{B}(x, r)$ denote its closure.
For the graphs $G = \nabla u, \partial u, D^\pm u \subset \mathbf{T}^N\times\mathbf{R}^N$ and a point $x \in \mathbf{T}^N$,
set $G(x) := \{ p \in \mathbf{R}^N \mid (x, p) \in G \}$.
A modulus is a non-negative function $\omega$ defined on $[0, \infty)$ with $\lim_{r \to 0}\omega(r) = 0$.

The following propositions give basic properties of the critical values.

\begin{proposition}[Characterization and rough estimates]
\label{t:cvest}
\begin{equation}
\label{e:cvest1}
c^+(H) = \inf_{u \in \Lip(\mathbf{T}^N)}\sup_{D^+ u}H,
\quad c^-(H) = \sup_{u \in \Lip(\mathbf{T}^N)}\inf_{D^- u}H,
\end{equation}
\begin{equation}
\label{e:cvest2}
\min_{x \in \mathbf{T}^N}H(x, 0)
\le c^\pm(H)
\le \max_{x \in \mathbf{T}^N}H(x, 0).
\end{equation}
\end{proposition}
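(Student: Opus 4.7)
My plan is to prove \eqref{e:cvest1} by unwinding the definition of subeigenvalue, and then derive \eqref{e:cvest2} from it by testing with $u \equiv 0$ on one side and by evaluating a general sub/supersolution at its extremum on the other. I do not anticipate any substantive obstacle, since the statement is essentially definitional once one recalls the basic calculus of super- and subdifferentials; the only items requiring care are the two elementary facts that $D^\pm u$ collapses to $\{0\}$ when $u \equiv 0$, and that $0 \in D^+u(x_0)$ at a maximum point $x_0$ of a Lipschitz function.

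For \eqref{e:cvest1}, observe that $a \in \mathbf{R}$ is a subeigenvalue exactly when some $u \in \Lip(\mathbf{T}^N)$ satisfies $\sup_{D^+u} H \le a$, because this is the definition of a Lipschitz subsolution. If such a pair exists then $\inf_{v \in \Lip} \sup_{D^+v} H \le \sup_{D^+u} H \le a$, and passing to the infimum over all subeigenvalues $a$ yields $\inf_v \sup_{D^+v} H \le c^+(H)$. Conversely, for every $v \in \Lip(\mathbf{T}^N)$ the value $a := \sup_{D^+v} H$ makes $v$ itself a subsolution at level $a$, hence, when $a$ is finite, $c^+(H) \le \sup_{D^+v} H$; taking the infimum over $v$ gives $c^+(H) \le \inf_v \sup_{D^+v} H$. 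The characterization of $c^-(H)$ is the exact mirror image, swapping $\sup/\inf$ and $D^+/D^-$.

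For \eqref{e:cvest2}, plug $u \equiv 0$ into \eqref{e:cvest1}. Expanding the definitions shows $D^+u(x) = D^-u(x) = \{0\}$ for every $x$, since $0 \le p \cdot (y-x) + o(|y-x|)$ forces $p \cdot v \ge 0$ for every unit vector $v$ and therefore $p = 0$; consequently $\sup_{D^+u} H = \max_x H(x, 0)$ and $\inf_{D^-u} H = \min_x H(x, 0)$, which immediately yields $c^+(H) \le \max_x H(x, 0)$ and $c^-(H) \ge \min_x H(x, 0)$. For the remaining two bounds I would invoke compactness: any Lipschitz $u$ attains its maximum at some $x_0 \in \mathbf{T}^N$, and the trivial inequality $u(y) - u(x_0) \le 0 = 0 \cdot (y - x_0)$ shows $(x_0, 0) \in D^+u$. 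Thus a subsolution $u$ at level $a$ satisfies $\min_x H(x, 0) \le H(x_0, 0) \le \sup_{D^+u} H \le a$, so $c^+(H) \ge \min_x H(x, 0)$; the dual argument at a minimum point of a supersolution delivers $c^-(H) \le \max_x H(x, 0)$.
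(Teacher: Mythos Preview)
Your proof is correct and follows essentially the same approach as the paper's: both arguments unwind the definition of subeigenvalue to obtain \eqref{e:cvest1}, then test with $u\equiv 0$ for one side of \eqref{e:cvest2} and use the maximum point $(x_0,0)\in D^+u$ for the other. Your write-up is simply more explicit about the elementary facts (that $D^\pm 0 = \{0\}$ and why $0\in D^+u(x_0)$ at a maximum) which the paper takes for granted.
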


\begin{proof}
We only show the equation and inequalities for the upper critical value $c^+(H)$
since a symmetric argument shows a proof for the lower critical value $c^-(H)$.
The proof is not so difficult;
for a subeigenvalue $a \in \mathbf{R}$, since there exists a Lipschitz subsolution, $\inf_{u \in \Lip(\mathbf{T}^N)}\sup_{D^+ u}H \le a$.
We also see that Lipschitz functions $u \in \Lip(\mathbf{T}^N)$ themselves are a subsolution of the equation \eqref{e:aep} with $a = \sup_{D^+ u}H$.
Therefore, \eqref{e:cvest1} holds.
Moreover, $u = 0$ is a subsolution of \eqref{e:aep} with $a = \max_{x \in \mathbf{T}^N}H(x, 0)$.
For a subeigenvalue $a \in \mathbf{R}$ and the subsolution $u \in \Lip(\mathbf{T}^N)$ of \eqref{e:aep},
since $(x, 0) \in D^+ u$ at a maximum point $x \in \mathbf{T}^N$ of $u$,
we have $\min_{x \in \mathbf{T}^N}H(x, 0) \le a$.
We have shown \eqref{e:cvest2}.
\end{proof}

\begin{proposition}[Monotonicity of critical values]
\label{t:monocri}
Let $H_1$ and $H_2$ be two Hamiltonians such that $H_1 \le H_2$ on $\mathbf{T}^N\times\mathbf{R}^N$.
Then, $c^\pm(H_1) \le c^\pm(H_2)$, respectively.
\end{proposition}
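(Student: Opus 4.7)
The plan is to reduce the statement to a one-line monotonicity argument by invoking the characterization \eqref{e:cvest1} from Proposition \ref{t:cvest}. The pointwise inequality $H_1 \le H_2$ on $\mathbf{T}^N\times\mathbf{R}^N$ immediately propagates to $\sup_G H_1 \le \sup_G H_2$ and $\inf_G H_1 \le \inf_G H_2$ for any graph $G \subset \mathbf{T}^N\times\mathbf{R}^N$. Specializing to $G = D^+u$ (resp.\ $G = D^-u$) for an arbitrary $u \in \Lip(\mathbf{T}^N)$ and then taking $\inf_u$ (resp.\ $\sup_u$) gives $c^+(H_1) \le c^+(H_2)$ and $c^-(H_1) \le c^-(H_2)$.

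Alternatively, one can work directly with the definition, without appealing to Proposition \ref{t:cvest}. If $a \in \mathbf{R}$ is a subeigenvalue of \eqref{e:aep} for $H_2$, then some $u \in \Lip(\mathbf{T}^N)$ satisfies $\sup_{D^+u}H_2 \le a$; combining with $H_1 \le H_2$ yields $\sup_{D^+u}H_1 \le a$, so that $a$ is a subeigenvalue for $H_1$ as well. Thus the set of subeigenvalues of $H_1$ contains that of $H_2$, and taking the infimum gives $c^+(H_1) \le c^+(H_2)$. The statement for $c^-$ is proved symmetrically, replacing subsolutions by supersolutions.

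There is no real obstacle here: the proposition is essentially a tautology given the variational characterization, and the only thing to check is that the elementary monotonicity of $\sup$ and $\inf$ under pointwise ordering is correctly transported through the two layers of optimization. I would therefore write the proof in just a few lines, treating the $c^+$ case in detail and remarking that the $c^-$ case follows by the symmetric argument.
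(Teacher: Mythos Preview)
Your proposal is correct and matches the paper's view: the paper omits the proof entirely with the remark that it is trivial, which is exactly your conclusion that the monotonicity is a tautology once one invokes the characterization \eqref{e:cvest1} (or the definition of sub-/supereigenvalues directly). There is nothing to add.
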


The proof is trivial so we omit it.

\begin{proposition}[Upper and lower critical values]
\label{t:cvcoincidence}
Assume that $H$ satisfies (A1).
Then, $c^-(H) \le c^+(H)$.
Moreover, if
\item[(A3)]
(Coercivity)
$H$ is coercive in the variable $p \in \mathbf{R}^N$ uniformly in $x \in \mathbf{T}^N$,
i.e.\
$$
\liminf_{|p| \to \infty}\inf_{x \in \mathbf{T}^N}H(x, p) = +\infty,
$$
then $c^-(H) = c^+(H)$ and they are a unique eigenvalue of \eqref{e:aep}.
\end{proposition}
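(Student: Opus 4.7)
The plan is to prove the two assertions in turn. For $c^-(H) \le c^+(H)$, fix $\varepsilon > 0$ and, via Proposition~\ref{t:cvest}, pick $u_1, u_2 \in \Lip(\mathbf{T}^N)$ with $\sup_{D^+ u_1} H \le c^+(H) + \varepsilon$ and $\inf_{D^- u_2} H \ge c^-(H) - \varepsilon$. Then apply the Crandall--Ishii doubling-of-variables trick to
\[
\Phi_\alpha(x,y) := u_1(x) - u_2(y) - \frac{|x-y|^2}{2\alpha}, \qquad \alpha > 0,
\]
which attains a maximum on the compact $\mathbf{T}^N\times\mathbf{T}^N$ at some $(x_\alpha, y_\alpha)$. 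Comparing $\Phi_\alpha(x_\alpha, y_\alpha) \ge \Phi_\alpha(x_\alpha, x_\alpha)$ and using the Lipschitz continuity of $u_2$ yields $|x_\alpha - y_\alpha| \le 2\alpha \Lip(u_2)$, so $p_\alpha := (x_\alpha - y_\alpha)/\alpha$ stays bounded. A direct reading of the extremality gives $p_\alpha \in D^+ u_1(x_\alpha)$ and $p_\alpha \in D^- u_2(y_\alpha)$, whence $c^-(H) - \varepsilon \le H(y_\alpha, p_\alpha)$ and $H(x_\alpha, p_\alpha) \le c^+(H) + \varepsilon$. Extracting a subsequence $\alpha \to 0$ with $(x_\alpha, y_\alpha) \to (x_*, x_*)$ and $p_\alpha \to p_*$, invoking (A1), and finally sending $\varepsilon \to 0$ closes this half.

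For the coercive case I would produce an actual eigenvalue $c$ via the classical discounted-cell-problem argument of Lions--Papanicolaou--Varadhan. For each $\lambda > 0$, standard comparison supplies a unique Lipschitz viscosity solution $v_\lambda$ of $\lambda v_\lambda + H(x, D v_\lambda) = 0$ on $\mathbf{T}^N$ with $\|\lambda v_\lambda\|_\infty \le \max_x |H(x,0)|$. Coercivity (A3) then bounds the viscosity gradients of $v_\lambda$ uniformly, because any smooth test function touching $v_\lambda$ from above has its gradient confined to a fixed sublevel set of $H$. Hence the normalized family $w_\lambda := v_\lambda - v_\lambda(0)$ is equi-Lipschitz and equi-bounded, so Arzelà--Ascoli together with the stability of viscosity solutions yields, along a subsequence $\lambda \to 0$, a Lipschitz viscosity solution $u$ of \eqref{e:aep} for some constant $a = c$. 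Since $u$ is simultaneously a subsolution and a supersolution, $c$ is both a subeigenvalue and a supereigenvalue, so $c^+(H) \le c \le c^-(H)$; combined with the first part this forces $c^-(H) = c^+(H) = c$. Uniqueness follows at once because any eigenvalue $a$ satisfies $c^-(H) \le a \le c^+(H)$.

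The step I expect to be most delicate is the Lipschitz-based bound $|x_\alpha - y_\alpha| \le 2\alpha \Lip(u_2)$, which is what controls $p_\alpha$ and lets continuity of $H$ alone close the doubling argument; without it one would have to resort to a sharper comparison principle. Everything else amounts to routine viscosity-solution machinery on the compact torus.
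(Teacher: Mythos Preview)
The paper does not actually supply a proof of this proposition; it merely says ``This is a well-known fact'' and refers to \cite{LPV87}, \cite{FS05} and \cite{HNN14}. Your proposal is a correct outline of precisely the standard argument found in those references: the doubling-of-variables estimate for the inequality $c^-(H)\le c^+(H)$, and the Lions--Papanicolaou--Varadhan vanishing-discount construction of an eigenpair under coercivity. The Lipschitz bound $|x_\alpha-y_\alpha|\le 2\alpha\,\Lip(u_2)$ that you single out is indeed the key point, and your derivation of it from $\Phi_\alpha(x_\alpha,y_\alpha)\ge\Phi_\alpha(x_\alpha,x_\alpha)$ is the usual one; after that, $(x_\alpha,p_\alpha)\in D^+u_1$ and $(y_\alpha,p_\alpha)\in D^-u_2$ with $|p_\alpha|\le 2\Lip(u_2)$, and (A1) on the compact set $\mathbf{T}^N\times\overline{B}(0,2\Lip(u_2))$ closes the argument.

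One small slip in the last sentence: the correct chain is $c^+(H)\le a\le c^-(H)$, not the reverse. An eigenvalue $a$ is in particular a subeigenvalue, hence it lies above the infimum $c^+(H)$, and likewise as a supereigenvalue it lies below the supremum $c^-(H)$. Since you have already established $c^-(H)=c^+(H)=c$, the conclusion $a=c$ is unaffected.
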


This is a well-known fact;
we refer the reader to \cite{LPV87}, \cite{FS05} and \cite{HNN14}.
Under the assumptions (A1) and (A3) the unique eigenvalue $c = c(H) := c^+(H) = c^-(H)$ is called \emph{critical value} of \eqref{e:aep}.

The generalized effective Hamiltonian introduced in the author's previous work \cite{HNN14} is nothing but the upper critical value $c^+$:

\begin{proposition}
\label{t:cvformula2}
Assume that $H$ satisfies (A1)
and let $H_n$ be a sequence of Hamiltonians satisfying (A1) and (A3).
If $H_n$ converges to $H$ in the sense of
$$\text{
$\liminf_{n}\inf_{\mathbf{T}^N\times\mathbf{R}^N}(H_n-H) \ge 0$,
$\limsup_{n}\sup_{\mathbf{T}^N\times B(0, R)}(H_n-H) \le 0$ for all $R > 0$,
}$$
then $c(H_n) \to c^+(H)$.
\end{proposition}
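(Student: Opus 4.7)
The plan is to split the desired conclusion into the two one-sided bounds $\limsup_n c(H_n) \le c^+(H)$ and $\liminf_n c(H_n) \ge c^+(H)$. Both rely on the variational identity $c^+(K) = \inf_{u \in \Lip(\mathbf{T}^N)} \sup_{D^+u} K$ from Proposition \ref{t:cvest}, combined with the fact from Proposition \ref{t:cvcoincidence} that (A3) applied to each $H_n$ gives $c(H_n) = c^+(H_n)$.

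For the upper bound, I would fix $\varepsilon > 0$ and pick $u \in \Lip(\mathbf{T}^N)$ with $\sup_{D^+u} H \le c^+(H) + \varepsilon$, observing that $D^+u$ is contained in $\mathbf{T}^N \times \overline{B}(0,L)$ for $L = \Lip(u)$. The locally uniform upper bound $\limsup_n \sup_{\mathbf{T}^N \times B(0,R)}(H_n - H) \le 0$, applied with any $R > L$, forces $\sup_{D^+u} H_n \le c^+(H) + 2\varepsilon$ for $n$ large. Using $u$ as a test function in the variational formula for $c^+(H_n)$ and invoking $c(H_n) = c^+(H_n)$ then yields $c(H_n) \le c^+(H) + 2\varepsilon$ eventually, so $\limsup_n c(H_n) \le c^+(H) + 2\varepsilon$, and $\varepsilon$ is arbitrary.

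For the lower bound, I would let $u_n \in \Lip(\mathbf{T}^N)$ be an actual viscosity solution of $H_n(x, Du_n) = c(H_n)$, whose existence is guaranteed by Proposition \ref{t:cvcoincidence} together with (A1) and (A3) for $H_n$. In particular $\sup_{D^+u_n} H_n \le c(H_n)$. The global lower bound $\liminf_n \inf_{\mathbf{T}^N \times \mathbf{R}^N}(H_n - H) \ge 0$ then gives $\sup_{D^+u_n} H \le c(H_n) + \varepsilon$ for all $n$ large, and inserting $u_n$ as a test function in the variational formula for $c^+(H)$ delivers $c^+(H) \le c(H_n) + \varepsilon$, so $c^+(H) \le \liminf_n c(H_n)$.

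The only subtle point I initially expected was needing uniform Lipschitz control on $(u_n)$ in order to extract a convergent subsequence, since (A3) is not assumed for the limit Hamiltonian $H$ and so no such estimate is available from $H$ itself. The plan sidesteps this entirely: the infimum characterizing $c^+(H)$ is tested against each $u_n$ individually, and only the Lipschitz property of that single function is used; no sequential compactness of $(u_n)$ is required. Chaining the two bounds yields $\lim_n c(H_n) = c^+(H)$.
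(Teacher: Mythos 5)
Your proof is correct and takes a genuinely more direct route than the paper's. The paper first establishes the result only for the specific approximation $H_n(x,p)=H(x,p)+|p|/n$: the lower bound $c(H_n)\ge c^+(H)$ follows from $H_n\ge H$ and the monotonicity Proposition \ref{t:monocri}, the upper bound from compactness of $\overline{D^+u}$ for a Lipschitz subsolution $u$ together with the fact that $|p|/n\to 0$ uniformly there. For a general approximating sequence the paper then defers to an argument from \cite[Theorem~4.1]{HNN14} asserting that $c(H_n)$ converges and the limit is independent of the choice of approximation. You avoid that detour entirely: your lower bound replaces the monotonicity argument by testing the variational formula $c^+(H)=\inf_{u\in\Lip}\sup_{D^+u}H$ of Proposition \ref{t:cvest} against the actual Lipschitz viscosity solution $u_n$ of $H_n(x,Du_n)=c(H_n)$ (which exists by Proposition \ref{t:cvcoincidence} since $H_n$ satisfies (A1) and (A3)), combined with the \emph{global} lower bound $\liminf_n\inf(H_n-H)\ge 0$; and your upper bound tests near-minimizers $u$ for $c^+(H)$ against the \emph{local} upper bound $\limsup_n\sup_{B(0,R)}(H_n-H)\le 0$, exploiting the boundedness of $D^+u$. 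Both directions are symmetric applications of the same variational identity. What this buys: a self-contained proof of the full general statement without any appeal to an external uniqueness-of-limit argument, and a clean illustration of why the two different modes of convergence (global one-sided, local on compacta) are exactly what each inequality requires. Your remark about not needing uniform Lipschitz control on $(u_n)$ is also correct and worth keeping, since it is the natural worry when (A3) fails for the limiting $H$.
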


\begin{proof}
Consider the specific approximation $H_n(x, p) = H(x, p)+|p|/n$ for $n = 1, \cdots$.
Since $H_n \ge H$,
we see by Proposition \ref{t:monocri} that $c(H_n) = c^+(H_n) \ge c^+(H)$,
which immediately yields $\liminf_n c(H_n) \ge c^+(H)$.
In order to the opposite inequality, fix a subeigenvalue $a$ and take the Lipschitz continuous subsolution $u$ of \eqref{e:aep}.
Note that the closure of $D^+ u$ is compact by the Lipschitz continuity.
Hence, $H_n$ becomes coincident to $H$ on $D^+ u$ for sufficiently large $n$.
Therefore, $c(H_n) = c^+(H_n) \le a$, which shows $\limsup_n c(H_n) \le c^+(H)$.
For general approximations one can show by the same arguments as in \cite[Theorem 4.1]{HNN14} that $c(H_n)$ is a convergent sequence and that the limit does not depend on the choice of the approximations.
Finally, we have $\lim_n c(H_n) = c^+(H)$.
\end{proof}

We state our main result.

\begin{theorem}[Minimax formulas]
\label{t:cvformula}
Assume (A1) and (A2') (not (A2)).
Then,
$$
c^+(H)
= \inf_{u \in \Lip(\mathbf{T}^N)}\sup_{\nabla u}H
= \inf_{u \in C^\infty(\mathbf{T}^N)}\sup_{\nabla u}H.
$$
In particular, if (A3) holds,
then they are nothing but the critical value $c(H)$ (the unique eigenvalue of \eqref{e:aep}).
\end{theorem}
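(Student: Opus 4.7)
The plan is to collapse the three-way equality to a single mollification inequality. Using Proposition~\ref{t:cvest} I first record the easy comparisons: for any $u \in \Lip(\mathbf{T}^N)$ the graph $\nabla u$ sits inside $D^+u$ (at each differentiability point the superdifferential is the singleton $\{\nabla u(x)\}$), so $\inf_{\Lip}\sup_{\nabla u}H \le \inf_{\Lip}\sup_{D^+u}H = c^+(H)$; for $u \in C^\infty(\mathbf{T}^N)$ these two graphs coincide, so $c^+(H) \le \inf_{C^\infty}\sup_{\nabla u}H$; and $\inf_{\Lip}\sup_{\nabla u}H \le \inf_{C^\infty}\sup_{\nabla u}H$ is immediate from $C^\infty \subset \Lip$. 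The theorem therefore reduces to the single inequality
$$\inf_{u \in C^\infty(\mathbf{T}^N)}\sup_{\nabla u}H \;\le\; \inf_{u \in \Lip(\mathbf{T}^N)}\sup_{\nabla u}H.$$
Given $u \in \Lip(\mathbf{T}^N)$ I set $u_n := u \ast \eta_n \in C^\infty(\mathbf{T}^N)$ and aim to prove $\limsup_n \sup_{\nabla u_n}H \le \sup_{\nabla u}H$. Both announced proofs attack exactly this claim.

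\textbf{First approach, via Lemma~\ref{t:measqconv}.} This imitates the classical convex argument, with Jensen replaced by its quasiconvex cousin. Writing $\nabla u_n(x) = \int \nabla u(x-y)\,\eta_n(y)\,dy$ and viewing $\eta_n(y)\,dy$ as a probability measure, I apply Lemma~\ref{t:measqconv} to the quasiconvex function $p \mapsto H(x,p)$ to get
$$H(x,\nabla u_n(x)) \le \esssup_{y \in \spt \eta_n} H(x,\nabla u(x-y)).$$
Since $u$ is Lipschitz, the gradients $\nabla u(x-y)$ stay in a fixed compact ball, so uniform continuity of $H$ on that ball supplies a modulus $\omega$ with $|H(x,\nabla u(x-y)) - H(x-y,\nabla u(x-y))| \le \omega(|y|)$. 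Substituting and taking $\sup_x$ yields $\sup_{\nabla u_n}H \le \sup_{\nabla u}H + \omega(1/n)$, as desired.

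\textbf{Second approach, via Lemma~\ref{t:mollip}.} I first exploit quasiconvexity at the graph level: the inclusions $\nabla u \subseteq \overline{\nabla u} \subseteq \partial u$, combined with the fact that each sublevel set $\{p : H(x,p) \le a\}$ is closed and convex (by (A1) and (A2') and Carath\'eodory), give $\sup_{\partial u}H = \sup_{\nabla u}H$ for every $u \in \Lip(\mathbf{T}^N)$. Suppose for contradiction that $\sup_{\nabla u_n}H > \sup_{\partial u}H + \varepsilon$ along a subsequence, realised by $(x_n,p_n) \in \nabla u_n$. The $\nabla u_n$ are uniformly bounded by $\Lip(u)$, so a further subsequence converges to some $(x,p)$; Lemma~\ref{t:mollip} places $(x,p) \in \partial u$, and continuity of $H$ contradicts the choice of $\varepsilon$. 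Hence $\limsup_n\sup_{\nabla u_n}H \le \sup_{\partial u}H = \sup_{\nabla u}H$, completing the reduction.

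\textbf{Main obstacle and final remark.} Once the reduction is in place each proof is short, but each one outsources \emph{all} of its substance to a single non-routine lemma: Lemma~\ref{t:measqconv} for the first, Lemma~\ref{t:mollip} for the second. I expect the second to be the harder route, since upgrading ``$p_n$ is a local average of classical gradients around $x_n$'' to ``$p$ lies in the convex hull of the classical gradients \emph{at} $x$'' requires a careful localisation that genuinely invokes Clarke's construction; by contrast the first lemma is a fairly clean measure-theoretic reformulation of the convexity of sublevel sets. Once the three quantities are shown equal, the coercive addendum is immediate from Proposition~\ref{t:cvcoincidence}.
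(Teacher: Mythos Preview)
Your proposal is correct and matches the paper's two proofs essentially line for line: the reduction to the single inequality $\inf_{C^\infty}\sup_{\nabla u}H \le \inf_{\Lip}\sup_{\nabla u}H$, the first approach via Lemma~\ref{t:measqconv} applied to $p\mapsto H(x,p)$ with the probability measure $\eta_n(y)\,dy$, and the second approach via the identity $\sup_{\nabla u}H=\sup_{\partial u}H$ (Proposition~\ref{t:cvest2}) followed by a compactness extraction and Lemma~\ref{t:mollip}. The only cosmetic differences are that the paper argues the second approach directly by picking a maximiser $(x_n,p_n)\in\nabla u_n$ rather than by contradiction, and that your appeal to Carath\'eodory in establishing $\sup_{\partial u}H=\sup_{\nabla u}H$ is unnecessary---closedness and convexity of the sublevel sets already force $\cco_p\overline{\nabla u}(x)$ into any sublevel set containing $\overline{\nabla u}(x)$.
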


Some inequalities hold unconditionally.

\begin{proposition}
$$
\inf_{u \in \Lip(\mathbf{T}^N)}\sup_{\nabla u}H
\le \inf_{u \in \Lip(\mathbf{T}^N)}\sup_{D^+ u}H
= c^+(H)
\le \inf_{u \in \Lip(\mathbf{T}^N)}\sup_{\partial u}H
\le \inf_{u \in C^\infty(\mathbf{T}^N)}\sup_{\nabla u}H.
$$
\end{proposition}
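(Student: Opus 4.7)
The plan is to verify the four inequalities in the chain separately, since each is essentially a structural statement about the containments among the graphs $\nabla u$, $D^+ u$, $\partial u$ and does not require the quasiconvexity assumption (A2$'$).

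First I would treat the leftmost inequality $\inf_{\Lip}\sup_{\nabla u}H\le\inf_{\Lip}\sup_{D^+u}H$. Wherever a Lipschitz function $u$ is classically differentiable, its classical gradient $p=\nabla u(x)$ satisfies the first-order expansion with equality, so in particular $(x,p)\in D^+u$. Hence $\nabla u\subset D^+u$, which immediately gives $\sup_{\nabla u}H\le\sup_{D^+u}H$ for every fixed $u\in\Lip(\mathbf{T}^N)$, and then taking the infimum preserves the inequality. The middle equality $\inf_{\Lip}\sup_{D^+u}H=c^+(H)$ is nothing but the first identity in \eqref{e:cvest1} of Proposition \ref{t:cvest}, so no new work is needed.

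For the third inequality $c^+(H)\le\inf_{\Lip}\sup_{\partial u}H$, I would combine the middle equality with the inclusion $D^+u\subset\partial u$ that holds for any Lipschitz function. This inclusion is a classical fact from nonsmooth analysis: a superdifferential vector can be realized as a convex combination of limits of classical gradients (see \cite{C83}). Given it, $\sup_{D^+u}H\le\sup_{\partial u}H$ pointwise in $u$, and the claim follows by taking the infimum over $u\in\Lip(\mathbf{T}^N)$ together with the middle identity. For the rightmost inequality $\inf_{\Lip}\sup_{\partial u}H\le\inf_{C^\infty}\sup_{\nabla u}H$, I would use that for $u\in C^\infty(\mathbf{T}^N)$ the classical gradient $\nabla u$ is continuous and single-valued, so $\overline{\nabla u}$ coincides with the graph of $\nabla u$ and has singleton $x$-slices; its closed convex hull in $p$ is therefore again $\{\nabla u(x)\}$, giving $\partial u(x)=\{\nabla u(x)\}$. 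Consequently $\sup_{\partial u}H=\sup_{\nabla u}H$ on the smaller class $C^\infty(\mathbf{T}^N)\subset\Lip(\mathbf{T}^N)$, and passing to the infimum over the larger class on the left yields the inequality.

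The only nontrivial ingredient in this plan is the inclusion $D^+u\subset\partial u$ for Lipschitz $u$; I would handle it by a brief appeal to Clarke's theory rather than reproving it, since the definition of $\partial u$ given in the excerpt matches Clarke's and the containment is a well-documented property of the generalized gradient. Everything else is essentially a set-theoretic observation about the three graphs, and no convexity or coercivity of $H$ is invoked, which is consistent with the unconditional nature of the statement.
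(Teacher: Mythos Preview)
Your proof is correct and follows essentially the same approach as the paper: both arguments reduce the chain of inequalities to the standard inclusions $\nabla u \subset D^+ u \subset \partial u$ for $u \in \Lip(\mathbf{T}^N)$ and the identity $\nabla u = D^+ u = \partial u$ for $u \in C^1(\mathbf{T}^N)$, together with the characterization \eqref{e:cvest1}. The paper simply cites \cite[Lemma II.1.8 and Subsection II.4.1]{BC97} for these facts while you spell out the reasoning and appeal to Clarke, but the structure is identical.
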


\begin{proof}
These inequalities follow from the well-known orders $\nabla u \subset D^+ u \subset \partial u$ for $u \in \Lip(\mathbf{T}^N)$ and $\nabla u = D^+ u = \partial u$ for $u \in C^1(\mathbf{T}^N)$;
see \cite[Lemma II.1.8 and Subsection II.4.1]{BC97}.
\end{proof}

\section{Proof with fundamental inequality for quasiconvex functions}
\label{s:pfmeasqconv}

Lemma \ref{t:measqconv} will result in the fundamental property of convex sets with probability measures by the level-set convexity of $f$:

\begin{lemma}[Fundamental inclusion for convex sets]
\label{t:measconvset}
Let $C$ be a closed subset of $\mathbf{R}^N$.
Then, $C$ is convex
if and only if
$$
e := \int_\Omega X d\mu \in C
$$
for all measure spaces $(\Omega, \mu)$ with $\mu(\Omega) = 1$ and all $\mathbf{R}^N$-valued integrable functions $X$ on $\Omega$ satisfying $X \in C$ $\mu$-a.e.\ on $\Omega$.
\end{lemma}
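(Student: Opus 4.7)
The plan is to establish the two directions separately, with the easy implication handled by a two-point measure and the harder implication deduced from Lemma \ref{t:measqconv} applied to a suitable quasiconvex function.

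For sufficiency (integral condition implies convexity), I would take arbitrary $x, y \in C$ and $\theta \in [0, 1]$, and build $\Omega = \{0, 1\}$ equipped with the probability measure $\mu$ assigning mass $\theta$ to $0$ and $1-\theta$ to $1$. Setting $X(0) = x$ and $X(1) = y$, the hypothesis forces
$$\int_\Omega X\, d\mu = \theta x + (1-\theta) y \in C,$$
which is exactly the convexity of $C$.

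For necessity, I would invoke Lemma \ref{t:measqconv} with the choice $f(p) := d(p, C)$, the Euclidean distance to $C$. This $f$ is $1$-Lipschitz (hence continuous, a fortiori lower semicontinuous) and convex since $C$ is convex; in particular it is quasiconvex. Because $X \in C$ holds $\mu$-a.e., we have $f \circ X = 0$ a.e., so $\esssup_\Omega f \circ X = 0$. Lemma \ref{t:measqconv} then yields
$$f(e) = f\!\left(\int_\Omega X\, d\mu\right) \le 0,$$
and since $C$ is closed the zero set of $f$ is exactly $C$, giving $e \in C$.

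There is no real obstacle here: the lemma is essentially a specialization of Lemma \ref{t:measqconv} to sublevel sets. The only mild subtlety is the selection of an auxiliary $f$ that is simultaneously lower semicontinuous, quasiconvex, and has $C$ as a sublevel set; the distance function $d(\cdot, C)$ is the cleanest choice and exploits both the closedness and convexity of $C$ in a single step. (Alternatively, one could bypass Lemma \ref{t:measqconv} and use the Hahn--Banach description of $C$ as an intersection of closed half-spaces, reducing to the scalar inequality $a \cdot e = \int a \cdot X\, d\mu \le b$ for every half-space $\{a \cdot y \le b\} \supset C$, but the route via Lemma \ref{t:measqconv} fits the narrative of the paper better.)
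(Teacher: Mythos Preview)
Your ``if'' direction matches the paper's argument exactly. The problem is your ``only if'' direction: you propose to deduce Lemma~\ref{t:measconvset} from Lemma~\ref{t:measqconv}, but in the paper the logical dependence runs the other way. The proof of Lemma~\ref{t:measqconv} given immediately after Lemma~\ref{t:measconvset} invokes Lemma~\ref{t:measconvset} (applied to $\cco E$) to conclude that $\int_{\tilde\Omega} X\,d\mu \in \cco E$. So using Lemma~\ref{t:measqconv} here would be circular.

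Ironically, the alternative you relegate to a parenthetical remark---separating $e$ from $C$ by a hyperplane and integrating the linear inequality---is precisely what the paper does. The paper supposes $e \notin C$, uses the hyperplane separation theorem to find $v \in \mathbf{R}^N$ and $a \in \mathbf{R}$ with $v\cdot x \le a < v\cdot e$ for all $x \in C$, and then computes $v\cdot e = \int_\Omega v\cdot X\,d\mu \le a$, a contradiction. Your distance-function idea is perfectly sound mathematically if one has an independent proof of the quasiconvex Jensen inequality, but within this paper's architecture it is not available at this point.
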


\begin{proof}
The ``if'' part is easy;
for $x, y \in C$ and $0 \le \theta \le 1$,
set $\Omega = \{ \pm 1 \}$, $\nu(\{ -1 \}) = \theta$, $\nu(\{ 1 \}) = 1-\theta$, $X(-1) = x$, $X(1) = y$.
Then, since $\int_\Omega X d\mu = \theta x+(1-\theta)y$, we have $\theta x+(1-\theta)y \in C$.

We show the ``only if'' part.
Suppose conversely that $e \notin C$.
Then, by the hyperplane separation theorem (see, e.g., \cite[Theorem 11.4]{R70}) one is able to find a vector $v \in \mathbf{R}^N$ such that
$$
v\cdot x \le a < v\cdot e
\quad \text{for all $x \in C$}
$$
with some $a \in \mathbf{R}$.
Since $X \in C$ a.s.,
$$
v\cdot e
= v\cdot\int_\Omega X d\mu
= \int_\Omega v\cdot X d\mu
\le \int_\Omega a d\mu
= a,
$$
which is contradicts to $v\cdot e > a$.
Therefore, $e \in C$.
\end{proof}

\begin{proof}[Proof of Lemma \ref{t:measqconv}]
The ``if'' part is easy as Lemma \ref{t:measconvset}.
We show the ``only if'' part.
First note that we may assume that $\esssup f\circ X = \sup f\circ X$ since $\tilde{\Omega} := \{ f\circ X \le \esssup_\Omega f\circ X \}$ satisfies $\esssup_{\Omega} f\circ X = \sup_{\tilde{\Omega}} f\circ X$ and $\mu(\Omega\setminus\tilde{\Omega}) = 0$.
Set $E := X(\tilde{\Omega})$ and take its closed convex hull $\cco E$.
Then, Lemma \ref{t:measconvset} shows that $\int_{\tilde{\Omega}} X d\mu \in \cco E$
and therefore
$$
f\left(\int_{\Omega} X d\mu\right)
= f\left(\int_{\tilde{\Omega}} X d\mu\right)
\le \sup_{\cco E}f
= \sup_{E}f
= \sup_{\tilde{\Omega}} f\circ X
= \esssup_\Omega f\circ X
$$
Here, the middle equation follows from the quasiconvexity assumption of $f$.
\end{proof}

\begin{remark}
We can easily prove the standard Jensen's inequality by applying Lemma \ref{t:measconvset} to the closed convex set $\{ (x, y) \mid y \ge f(x) \}$ for a convex function $f$.
\end{remark}

We are now able to show Theorem \ref{t:cvformula}.

\begin{proof}[Proof of Theorem \ref{t:cvformula} using Lemma \ref{t:measqconv}]
It is enough to show
\begin{equation}
\label{e:cvformula3}
\inf_{u \in C^\infty(\mathbf{T}^N)}\sup_{\nabla u}H
\le \inf_{u \in \Lip(\mathbf{T}^N)}\sup_{\nabla u}H.
\end{equation}
Fix $u \in \Lip(\mathbf{T}^N)$ and take the standard mollifications $u_n := u*\eta_n \in C^\infty(\mathbf{T}^N)$.
Note that $H$ is uniformly continuous on $\mathbf{T}^N\times \overline{B}(0, R)$ with $R := \esssup_{\mathbf{T}^N}|\nabla u|$;
there is a modulus $\omega$ such that $|H(x, p)-H(y, q)| \le \omega(|x-y|+|p-q|)$ for all $x, y \in \mathbf{T}^N$ and $p, q \in \overline{B}(0, R)$.
Fix $(x, p) \in \nabla u_n$.
Then, we can calculate that
$$
\begin{aligned}
H(x, p)
&= H(x, \nabla u_n(x))
= H\left(x, \int_{\mathbf{T}^N} \nabla u(x-y)\eta_n(y)d y\right) \\
&\le \esssup_{y \in \spt(\eta_n)}H(x, \nabla u(x-y)) \\
&\le \sup_{\nabla u}H+\esssup_{y \in \spt(\eta_n)}\omega(|y|).
\end{aligned}
$$
Here, we have used the quasiconvexity (A2') and Lemma \ref{t:measqconv} in order to obtain the first inequality.
Taking a limit with respect to $n$,
we have $\sup_{\nabla u_n}H(x, p) \le \sup_{\nabla u}H$,
which implies \eqref{e:cvformula3}.
We have obtained all inequalities to show Theorem \ref{t:cvformula}.
\end{proof}

This proof also shows approximation of viscosity solutions,
whose convex versions have been established in \cite{BJ90} and \cite[Section II.5]{BC97}.

\begin{proposition}[Approximation of viscosity solutions]
\label{t:dstab}
Assume (A1) and (A2').
Let $u \in \Lip(\mathbf{T}^N)$ and let $u_n \in C^\infty(\mathbf{T}^N)$ be the standard mollification $u*\eta_n$.
If $u$ is a viscosity subsolution of \eqref{e:aep},
then $u_n$ are a viscosity subsolution of $H(x, D u_n) = a+\omega(1/n)$ in $\mathbf{T}^N$ with some modulus $\omega$.
\end{proposition}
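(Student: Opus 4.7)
The plan is to reuse the calculation already carried out in the proof of Theorem~\ref{t:cvformula}, but applied pointwise in $x$ rather than after taking an infimum over $u$. The only additional input is the observation that being a viscosity subsolution gives a uniform a.e.\ bound on the classical gradient.

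First I would record the starting point: since $u \in \Lip(\mathbf{T}^N)$ is a viscosity subsolution, we have $\sup_{D^+ u} H \le a$, and by Rademacher's theorem $u$ is differentiable a.e., with $(x, \nabla u(x)) \in D^+ u$ at every point of differentiability. Hence
$$
H(x, \nabla u(x)) \le a \quad \text{for a.e.\ } x \in \mathbf{T}^N,
$$
i.e.\ $\esssup_{\nabla u} H \le a$. Also, since $u_n \in C^\infty(\mathbf{T}^N)$, it suffices to show the classical (pointwise) inequality $H(x, \nabla u_n(x)) \le a + \omega(1/n)$ for every $x \in \mathbf{T}^N$, because classical subsolutions of a continuous equation are viscosity subsolutions.

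Next I would repeat the main computation from the proof of Theorem~\ref{t:cvformula}. Fix $x \in \mathbf{T}^N$ and set $R := \esssup_{\mathbf{T}^N} |\nabla u|$. By (A1), $H$ is uniformly continuous on $\mathbf{T}^N \times \overline{B}(0, R)$, so pick a modulus $\omega$ with
$$
|H(y, q) - H(z, q)| \le \omega(|y-z|) \quad \text{for } y, z \in \mathbf{T}^N, \ q \in \overline{B}(0, R).
$$
Apply Lemma~\ref{t:measqconv} to the lower semicontinuous quasiconvex function $p \mapsto H(x, p)$, with the probability measure $\eta_n(y)\,dy$ on $\spt \eta_n$ and the integrable map $y \mapsto \nabla u(x-y)$. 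Since $\nabla u_n(x) = \int \nabla u(x-y)\eta_n(y)\,dy$, this yields
$$
H(x, \nabla u_n(x)) \le \esssup_{y \in \spt \eta_n} H(x, \nabla u(x-y)).
$$
Using the modulus to replace the first $x$ by $x-y$ at the cost of $\omega(|y|) \le \omega(1/n)$, and using the a.e.\ subsolution bound from the first paragraph, I obtain
$$
H(x, \nabla u_n(x)) \le \esssup_{y \in \spt \eta_n} \bigl( H(x-y, \nabla u(x-y)) + \omega(|y|) \bigr) \le a + \omega(1/n).
$$

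The only subtle point, which I do not expect to be a real obstacle, is that $\nabla u$ is only defined a.e., so one must be careful to pass from the essential bound $H(x-y, \nabla u(x-y)) \le a$ a.e.\ to an essential supremum estimate inside the integral; this is harmless because the measure $\eta_n(y)\,dy$ is absolutely continuous with respect to Lebesgue measure, so a.e.\ inequalities in $x-y$ transfer to a.e.\ inequalities in $y$ and hence to essential sups. With this in hand the conclusion is immediate.
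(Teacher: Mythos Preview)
Your proof is correct and is precisely what the paper has in mind: the paper does not give an independent proof of this proposition but simply remarks that ``this proof also shows approximation of viscosity solutions,'' referring back to the mollification computation in the proof of Theorem~\ref{t:cvformula} via Lemma~\ref{t:measqconv}. You have faithfully reproduced that computation, added the observation $\nabla u\subset D^+ u$ to turn the subsolution hypothesis into the a.e.\ bound $H(x,\nabla u(x))\le a$, and correctly handled the measure-theoretic point about essential suprema.
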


\section{Proof with generalized gradients}
\label{s:pfmollip}

We begin with:

\begin{proposition}
\label{t:cvest2}
Assume (A1) and (A2').
Then,
$$
\inf_{u \in \Lip(\mathbf{T}^N)}\sup_{\nabla u}H
= \inf_{u \in \Lip(\mathbf{T}^N)}\sup_{D^+ u}H
= \inf_{u \in \Lip(\mathbf{T}^N)}\sup_{\partial u}H.
$$
\end{proposition}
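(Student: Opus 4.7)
The plan is to establish the three claimed infima are equal by showing that the outer two are the same: the chain $\nabla u \subset D^+u \subset \partial u$ (cited in the preceding proposition) gives the easy direction
$$\inf_{u \in \Lip(\mathbf{T}^N)}\sup_{\nabla u}H \le \inf_{u \in \Lip(\mathbf{T}^N)}\sup_{D^+ u}H \le \inf_{u \in \Lip(\mathbf{T}^N)}\sup_{\partial u}H,$$
so it suffices to prove the single pointwise (in $u$) inequality $\sup_{\partial u}H \le \sup_{\nabla u}H$ for each $u \in \Lip(\mathbf{T}^N)$. Taking the infimum over $u$ then forces the three quantities to coincide.

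To prove this pointwise inequality, fix $u \in \Lip(\mathbf{T}^N)$ and set $a := \sup_{\nabla u}H$ (the case $a = +\infty$ being trivial). The strategy is to show that the sublevel set
$$L_a := \{(x, p) \in \mathbf{T}^N \times \mathbf{R}^N \mid H(x, p) \le a\}$$
contains $\partial u$. By definition, $\nabla u \subset L_a$, and by (A1) the set $L_a$ is closed, hence $\overline{\nabla u} \subset L_a$. For any fixed $x \in \mathbf{T}^N$, the slice $L_a(x) = \{p \in \mathbf{R}^N \mid H(x, p) \le a\}$ is convex by the quasiconvexity (A2') (and closed by continuity). Therefore the closed convex hull in $p$ of the slice $(\overline{\nabla u})(x)$ still lies inside $L_a(x)$. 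Recalling that $\partial u(x) = \cco_p(\overline{\nabla u})(x)$, this gives $\partial u(x) \subset L_a(x)$ for every $x$, i.e.\ $\sup_{\partial u}H \le a = \sup_{\nabla u}H$, as required.

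The main (and really only) idea is the step where quasiconvexity is used: turning the \emph{algebraic} condition (A2') into the \emph{geometric} statement that sublevel sets are convex, so that closure and convex hull operations in $\partial u$ do not escape from a sublevel set of $H$. I do not foresee any serious obstacle; the argument does not need Lemma~\ref{t:measqconv} or Lemma~\ref{t:mollip}, only the equivalent formulation of (A2') as level-set convexity, the continuity (A1), and the definition of the generalized gradient from Section~\ref{s:intro}.
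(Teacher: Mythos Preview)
Your proof is correct and follows essentially the same route as the paper: both arguments establish the pointwise equality $\sup_{\nabla u}H = \sup_{\partial u}H$ for each $u \in \Lip(\mathbf{T}^N)$ by using (A1) to pass to the closure $\overline{\nabla u}$ and (A2') to pass to the fiberwise convex hull. The only cosmetic difference is that the paper writes this as the chain $\sup_{\nabla u}H = \sup_{\overline{\nabla u}}H = \sup_{\cco_p\overline{\nabla u}}H = \sup_{\partial u}H$, while you phrase the same two steps through containment in the sublevel set $L_a$.
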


\begin{proof}
This is true since
\begin{equation}
\label{e:vsolgsol}
\sup_{\nabla u}H
= \sup_{\overline{\nabla u}}H
= \sup_{\cco_p\overline{\nabla u}}H
= \sup_{\partial u}H
\end{equation}
for all $u \in \Lip(\mathbf{T}^N)$.
In order to obtain the second equality,
we need the quasiconvexity assumption (A2').
\end{proof}

\begin{remark}
This proof also implies \cite[Lemma 2.1]{AS13}.
\end{remark}

We prove Lemma \ref{t:mollip}
in order to show the remaining inequality in Theorem \ref{t:cvformula}
\begin{equation}
\label{e:cvformula4}
\inf_{u \in C^\infty(\mathbf{T}^N)}\sup_{\nabla u}H
\le \inf_{u \in \Lip(\mathbf{T}^N)}\sup_{\partial u}H.
\end{equation}
The proof, which uses Jensen's inequality to distance functions from convex sets, is due to A.~Siconolfi.
A similar technique appears in \cite{FS05}.
We first prepare:

\begin{lemma}[Continuity of generalized gradients]
\label{t:distlip}
Let $u \in \Lip(\mathbf{T}^N)$.
Then, for each $x \in \mathbf{T}^N$ there exists a modulus $\omega_x$ such that
\begin{equation}
\label{e:distlip}
d(\partial u(x), p) \le \omega_x(|y-x|)
\quad \text{for all $(y, p) \in \partial u$.}
\end{equation}
\end{lemma}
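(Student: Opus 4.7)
The plan is to prove upper semicontinuity of the multifunction $y \mapsto \partial u(y)$ at $x$, which is equivalent to the stated estimate. The two structural facts I would rely on are: (i) $\overline{\nabla u}$ is closed in $\mathbf{T}^N \times \mathbf{R}^N$ by construction, and (ii) since $u$ is Lipschitz with constant $L := \Lip(u)$, both $\overline{\nabla u}$ and $\partial u$ are contained in $\mathbf{T}^N \times \overline{B}(0, L)$, so compactness is available for every limiting argument in the $p$-variable.

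I would then argue by contradiction. If the estimate failed at $x$, there would exist $\varepsilon > 0$ and a sequence $(y_n, p_n) \in \partial u$ with $y_n \to x$ and $d(\partial u(x), p_n) \ge \varepsilon$. By compactness extract $p_n \to p^\ast$; then $d(\partial u(x), p^\ast) \ge \varepsilon$, so it suffices to contradict this by proving $p^\ast \in \partial u(x)$. Since $p_n \in \cco_p \overline{\nabla u}(y_n) \subset \mathbf{R}^N$, Carath\'eodory's theorem furnishes a representation $p_n = \sum_{i=0}^N \lambda_n^i q_n^i$ with $\lambda_n^i \ge 0$, $\sum_i \lambda_n^i = 1$, and $q_n^i \in \overline{\nabla u}(y_n) \subset \overline{B}(0, L)$. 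After further subsequencing, $\lambda_n^i \to \lambda^i$ and $q_n^i \to q^i$; closedness of $\overline{\nabla u}$ together with $(y_n, q_n^i) \to (x, q^i)$ yields $(x, q^i) \in \overline{\nabla u}$, and therefore $p^\ast = \sum_i \lambda^i q^i \in \cco_p \overline{\nabla u}(x) = \partial u(x)$, the desired contradiction. One then packages this sequential statement into a modulus by setting $\omega_x(r) := \sup\{ d(\partial u(x), p) : (y, p) \in \partial u,\ |y - x| \le r \}$; the argument above shows $\omega_x(r) \to 0$ as $r \to 0^+$, and a standard dominating modulus can be chosen if one insists on continuity of $\omega_x$.

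The main obstacle I expect is taking the limit \emph{inside} the convex hull, since the sets $\partial u(y_n)$ vary with $n$ and one cannot simply compare them to $\partial u(x)$. Carath\'eodory's theorem is precisely what bypasses this difficulty: it reduces every convex combination to one with at most $N+1$ terms, after which compactness of the coefficients in $[0,1]$ and of the atoms in the fixed ball $\overline{B}(0, L)$ makes the passage to the limit routine, while the closedness of the graph $\overline{\nabla u}$ guarantees that the limit atoms still belong to $\overline{\nabla u}(x)$.
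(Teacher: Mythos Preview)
Your proof is correct. The contradiction setup, the compactness extraction, and the Carath\'eodory decomposition all work as you describe; the one point worth noting is that $\overline{\nabla u}(y_n)$ is compact (closed and contained in $\overline{B}(0,L)$), so its convex hull is already closed and Carath\'eodory applies without further justification.

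The paper takes a shorter but less self-contained route. It simply uses that the full graph $\partial u \subset \mathbf{T}^N\times\mathbf{R}^N$ is compact (this is asserted earlier in the paper, with a reference to Clarke), and then observes that $\partial u$ and the compact set $\{x\}\times\{p:d(\partial u(x),p)=\varepsilon\}$ are disjoint, hence separated by a positive distance; the modulus follows immediately. Your argument, by contrast, does not assume closedness of $\partial u$ as a graph: you only use that $\overline{\nabla u}$ is closed, and you effectively \emph{prove} closedness of the graph of $\partial u$ along the way via the Carath\'eodory decomposition and passage to the limit. So the paper's proof is a two-line topological argument that leans on a quoted structural fact about Clarke's gradient, while yours is longer but entirely elementary and self-contained. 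Either is perfectly adequate here; yours has the advantage that a reader need not consult the references to see why $\partial u$ has closed graph.
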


This lemma means that the the generalized gradients $\partial u$ is upper semicontinuous as a set-valued function.
The proof is easy since $\partial u$ is compact (see, e.g., \cite[Proposition 2.1.5]{C83} and \cite[Proposition 1.4.8]{AF90})
but we prove it for completeness.

\begin{proof}
Fix arbitrary $\varepsilon > 0$.
Since $\partial u$ and $\{ x \}\times\{ p \mid d(\partial u(x), p) = \varepsilon \}$ are disjoint compact sets,
$\partial u$ and $B(x, \delta)\times\{ p \mid d(\partial u(x), p) \ge \varepsilon-\delta \}$ have empty intersections for some small $\delta > 0$.
Therefore, every $(y, p) \in \partial u$ with $|y-x| < \delta$ satisfies $d(\partial u(x), p) < \varepsilon-\delta < \varepsilon$.
\end{proof}

\begin{proof}[Proof of Lemma \ref{t:mollip}]
First note that the set $\partial u(x)$ is non-empty closed convex and hence $d(\partial u(x), \cdot)$ is a (Lipschitz) continuous convex function on $\mathbf{R}^N$.
We observe by Jensen's inequality that
$$
\begin{aligned}
d(\partial u(x), q)
&= d(\partial u(x), \nabla u_n(y))
= d\left(\partial u(x), \int_{\mathbf{T}^N} \nabla u(y-z)\eta_n(z)d z\right) \\
&\le \int_{\mathbf{T}^N} d(\partial u(x), \nabla u(y-z))\eta_n(z)d z
\end{aligned}
$$
for all $(y, q) \in \nabla u_n$.
By Lemma \ref{t:distlip} we have
$$
\begin{aligned}
d(\partial u(x), p_n)
&\le \int_{\mathbf{T}^N} d(\partial u(x), \nabla u(x_n-z))\eta_n(z)d z \\
&\le \int_{\mathbf{T}^N} \omega_x(|x_n-z-x|)\eta_n(z)d z
\le \sup_{z \in \spt \eta_n} \omega_x(|x_n-z-x|).
\end{aligned}
$$
This shows that $d(\partial u(x), p_n) \to 0$
and therefore $p \in \partial u(x)$.
\end{proof}

Lemma \ref{t:mollip} yields another proof of Theorem \ref{t:cvformula}.

\begin{proof}[Proof of Theorem \ref{t:cvformula} using Lemma \ref{t:mollip}]
It is enough to show \eqref{e:cvformula4}.
Fix $u \in \Lip(\mathbf{T}^N)$ and take the standard mollifications $u_n := u*\eta_n \in C^\infty(\mathbf{T}^N)$.
Also take a maximum point $(x_n, p_n) \in \nabla u_n$ of $H$ so that $H(x_n, p_n) = \sup_{\nabla u_n}H$.
Now, note that the sequence $(x_n, p_n)$ has an accumulation point $(x, p) \in \mathbf{T}^N\times \overline{B}(0, \esssup_{\mathbf{T}^N}|\nabla u|)$ since $u$ is Lipschitz continuous.
We then see by Lemma \ref{t:mollip} that $(x, p) \in \partial u$
and therefore
$$
\inf_{u \in C^\infty(\mathbf{T}^N)}\sup_{\nabla u}H
\le \sup_{\nabla u_n}H
= H(x_n, p_n)
\to H(x, p)
\le \sup_{\partial u}H.
$$
Since $u \in \Lip(\mathbf{T}^N)$ is arbitrary, we can obtain the desired inequality \eqref{e:cvformula4}.
We now have obtained all the equations in Theorem \ref{t:cvformula}.
\end{proof}

\begin{remark}
This proof is a bit longer than the proof in Section \ref{s:pfmeasqconv}
but may give a deeper observation.
For example, there is a question that if $u_n \in C^\infty$ converges to $u \in \Lip$ uniformly, then a sequence $(x_n, p_n) \in \nabla u_n$ has an accumulation point belonging to $\partial u$.
This is an open problem concerned with stability of viscosity solutions.
We also remark that one is able to prove Proposition \ref{t:dstab} by combining Lemma \ref{t:mollip} and the equation \eqref{e:vsolgsol}.
\end{remark}

\section*{Acknowledgments}

The author would like to thank Antonio~Siconolfi for insightful comments and fruitful discussions on this problem.
His suggestion for Lemma \ref{t:mollip} is crucial in this work.
The author is also grateful to Yoshikazu~Giga, Hiroyoshi~Mitake, Hung~Tran, Nao~Hamamuki and Tokinaga~Namba for their useful remarks.
In particular, Mitake and Tran provided many suggestions to improve this article.
This work started from Tran's intensive lectures on stochastic homogenization in Tokyo.

\providecommand{\bysame}{\leavevmode\hbox to3em{\hrulefill}\thinspace}
\providecommand{\MR}{\relax\ifhmode\unskip\space\fi MR }
\providecommand{\MRhref}[2]{  \href{http://www.ams.org/mathscinet-getitem?mr=#1}{#2}
}
\providecommand{\href}[2]{#2}

\end{document}